\documentclass[a4paper, 10pt, conference]{ieeeconf}

\IEEEoverridecommandlockouts
\usepackage{times}
\usepackage{amsmath,amssymb}
\usepackage{amsthm}
\usepackage{mathtools} 
\usepackage{caption,subcaption}
\usepackage{tikz,pgfplots}
\pgfplotsset{compat=newest}
\usetikzlibrary{positioning,calc,backgrounds}
\usepackage{algorithm,algpseudocode,xcolor}
\usepackage{xspace}
\usepackage{csquotes}
\usepackage[hidelinks]{hyperref} 

\DeclareMathAlphabet{\mathpzc}{OT1}{pzc}{m}{it}

\title{\LARGE \bf
    On Data-Driven Unbiased 
    Predictors using the Koopman Operator
}

\author{Roland Schurig$^{1}$, 
        Pieter van Goor$^2$, 
        Karl Worthmann$^3$, 
        Rolf Findeisen$^{1}$%
\thanks{$^{1}$ Control and Cyber-Physical Systems Laboratory, TU Darmstadt, Germany, 
        \{roland.schurig, rolf.findeisen\}@iat.tu-darmstadt.de;  $^{2}$ School of Aerospace, Mechanical, and Mechatronic Engineering, The University of Sydney, Australia,  pieter.vangoor@sydney.edu.au;  $^{3}$ Optimization-based Control Group, Institute of Mathematics, Technische Universit{\"a}t Ilmenau, Germany, karl.worthmann@tu-ilmenau.de}%
}

\newcommand*{\ie}{i.e.,\@\xspace}
\newcommand*{\eg}{e.g.,\@\xspace}

\newcommand*{\linspace}[1]{\ensuremath{ #1 }} 	
\DeclareMathOperator{\ran}{ran} 
\DeclareMathOperator{\tr}{tr} 
\newcommand*{\Matrix}[1]{\ensuremath{ \begin{pmatrix} #1 \end{pmatrix} }} 
\newcommand*{\rmspace}[2]{\ensuremath{ \mathrm{M} (#1 \times #2 , \mathbb{R}) }} 
\newcommand*{\iprod}[2]{\ensuremath{ \langle #1, #2 \rangle }} 	

\newcommand*{\exv}[1]{\ensuremath{ \mathbb{E}(#1) }} 
\newcommand*{\var}[1]{\ensuremath{ \mathrm{var}(#1) }} 

\definecolor{myblue}{HTML}{1F77B4}
\definecolor{myorange}{HTML}{FF7F0E}
\definecolor{mygreen}{HTML}{2CA02C}
\definecolor{myred}{HTML}{D62728}

\newtheorem{lemma}{Lemma}
\newtheorem{proposition}{Proposition}
\newtheorem{corollary}{Corollary}

\theoremstyle{remark}
\newtheorem{remark}{Remark}

\newcommand{\figref}[1]{\hyperref[#1]{Fig.~\ref*{#1}}}
\newcommand{\propref}[1]{\hyperref[#1]{Proposition~\ref*{#1}}}
\newcommand{\lemmaref}[1]{\hyperref[#1]{Lemma~\ref*{#1}}}
\newcommand{\cororef}[1]{\hyperref[#1]{Corollary~\ref*{#1}}}
\newcommand{\secref}[1]{\hyperref[#1]{Section~\ref*{#1}}}

\begin{document}
\twocolumn
\maketitle
\thispagestyle{empty}
\pagestyle{empty}

\begin{abstract}
The Koopman operator and its data-driven approximations, such as extended dynamic mode decomposition (EDMD), are widely used for analysing, modelling, and controlling nonlinear dynamical systems. However, when the true Koopman eigenfunctions cannot be identified from finite data, multi-step predictions may suffer from structural inaccuracies and systematic bias.
To address this issue, we analyse the first and second moments of the multi-step prediction residual. By decomposing the residual into contributions from the one-step approximation error and the propagation of accumulated inaccuracies, we derive a closed-form expression characterising these effects. This analysis enables the development of a novel and computationally efficient algorithm that enforces unbiasedness and reduces variance in the resulting predictor.
The proposed method is validated in numerical simulations, showing improved uncertainty properties compared to standard EDMD. These results lay the foundation for uncertainty-aware and unbiased Koopman-based prediction frameworks that can be extended to controlled and stochastic systems.
\end{abstract}

\section{Introduction}\label{sec:intro}
Data-driven analysis, modelling, and control of dynamical systems remains a challenging task, particularly in the presence of nonlinearities. Hence, many techniques based on \textit{lifting} the dynamics into a higher-dimensional space have been proposed to model and analyse their evolution and structural properties using tools from linear systems theory. A particularly successful method in this realm is extended dynamic mode decomposition~\cite{Williams2015} in its various algorithmic realisations, see, \eg \cite{colbrook2024multiverse}. The theoretical backbone for its analysis is the Koopman operator, which has established itself as a key tool since the seminal papers~\cite{mezic2005spectral,rowley2009spectral}, see also the recent overview~\cite{StraWort26}.
Nowadays, an error analysis is available both in the infinite-dictionary (projection) and infinite-data (estimation) limit~\cite{korda2018convergence}, as well as for settings with finitely many samples; see, e.g., \cite{philipp2024variance} and~\cite{kohne2025error} for recent results for EDMD and kernel EDMD, respectively.

Recent developments focus on reliable predictions in the Koopman framework, addressing inconsistencies under repeated prediction. In \cite{vanGoor2025}, a \emph{maximum-likelihood reprojection} method is proposed to maintain consistency with the Koopman manifold, interpreting reprojection as a statistically optimal operation under Gaussian residuals. The work \cite{schurig2025} introduced a geometric framework for dictionary learning on the Grassmann manifold, optimising observable subspaces for approximate Koopman invariance.

This work focuses on the number of steps that can be reliably predicted by the lifted system in an auto-regressive manner. A probabilistic analysis of the first and second moments of the residual error in the lifted space is conducted in the case of finite dictionary and finite data. This analysis is carried out for a generic predictor in the lifted space, and it is argued that the predictor stemming from EDMD is optimal in the sense that it is unbiased and variance-optimal for one step.

The analysis features an analytic closed-form expression of the expected residual, allowing us to extend this result to longer prediction horizons. It demonstrates a structured way to enforce unbiasedness of the multi-step predictor by zeroing the expectation of the associated residual with respect to the empirical measure induced by the collected data points. This leads to explicit conditions under which the multi-step residual vanishes in expectation and clarifies the remaining degrees of freedom.

The evolution of the overall residual is then expressed in terms of a linear system and local residuals, which can be influenced through the selection of suitable predictors. We derive a basis in the space of linear predictors that makes the conditions for unbiasedness transparent. The remaining degrees of freedom are then leveraged to reduce the multi-step variance. We put special focus on numerical applicability: the proposed predictor has a closed-form solution that can be computed efficiently using tools from numerical linear algebra.

The results presented in this work provide a principled understanding of how multi-step prediction errors accumulate in Koopman-based models and offer insights under which a lifted predictor remains unbiased over multiple time steps. This yields a practical and computationally efficient method for constructing unbiased and variance-reduced multi-step predictors directly from finite data. As such, the proposed framework strengthens the theoretical foundations of data-driven Koopman prediction and enables more reliable long-horizon forecasting, which is essential for applications in control, simulation, and uncertainty-aware decision making.

The paper is structured as follows.
In \secref{sec:koopman_compression}, the Koopman framework is introduced and the EDMD algorithm is presented as a data-driven approximation of the Koopman operator. This leads to the probabilistic analysis of the lifted dynamics in \secref{sec:one_step_predictions}, focusing on the one-step behaviour optimised by EDMD. Based on that, we present necessary and sufficient conditions for the unbiasedness of the predictor over multiple time steps in \secref{sec:one_step_predictions}, which is not guaranteed by the EDMD predictor. Additionally, a possible way to exploit the remaining degrees of freedom for minimising variance is outlined. The proposed predictor is introduced in \secref{sec:predictor}, and a numerical example is presented in \secref{sec:example}. We conclude in \secref{sec:conclusion}.

\paragraph*{Notation}
The notation is standard. 
The natural numbers are denoted by $\mathbb{N}$, and $\mathbb{N}_0 \coloneqq \mathbb{N} \cup \{0\}$.
The Euclidean space of all $(m \times n)$-matrices over $\mathbb{R}$ is denoted by $\rmspace{m}{n}$, with the standard vector space structure and inner product $\iprod{A}{B} \coloneqq \tr(A^\top B)$. 
This induces the Frobenius norm $\| A \|_\mathrm{F} \coloneqq \sqrt{\iprod{A}{A}}$. 
The expectation and variance of a random variable $X$ are denoted by $\exv{X}$ and $\var{X}$, respectively.

\section{Compressions in the Koopman Framework}
\label{sec:koopman_compression}

We consider the discrete-time nonlinear system
\begin{align} \label{eq:nonlinear_sys}
  x(t+1) &= f(x(t)), &
  x(0) = \bar{x},
\end{align}
with continuous state-transition map $f \colon \mathbb{R}^n \to \mathbb{R}^n$. 
For ease of exposition, we assume that $X \subset \mathbb{R}^n$ is non-empty, compact, and forward invariant under $f$, \ie $x \in X$ implies $f(x) \in X$. 
The trajectory starting from the initial state $\bar{x} \in X$ is denoted by $x(\cdot, \bar{x}) \colon \mathbb{N}_0 \to X$.

The linear \emph{Koopman operator} $K$ maps a function $\varphi \in L^2(X,\mathbb{R})$ to another function $K \varphi \in L^2(X,\mathbb{R})$ and is defined by the identity
\begin{equation*}
    (K \varphi)(x) = (\varphi \circ f)(x) \qquad\forall\, x \in X.
\end{equation*}
The functions $\varphi \in L^2(X,\mathbb{R})$ are called \emph{observables}. 
The Koopman operator offers a way to view the dynamical system \eqref{eq:nonlinear_sys} through a linear lens; however, it is infinite-dimensional. 

To obtain a finite-dimensional approximation of $K$, consider $m$ linearly independent and continuous observables $\psi_1, \dots, \psi_m \in L^2(X,\mathbb{R})$, and denote by $\linspace{V} \subset L^2(X,\mathbb{R})$ their span. 
This subspace is finite-dimensional, but $\psi \in \linspace{V}$ does not necessarily imply that the propagated observable $K \psi$ is also contained in $\linspace{V}$. 
Exceptions are so-called Koopman eigenfunctions and invariant subspaces contained in~$V$, which have proven to be extremely useful, see, e.g.,  \cite{mezic2013analysis,mauroy2016global,haseli2021learning}, but may not be observable if, e.g., observables correspond to sensors or output of the plant in consideration.
To mitigate problems resulting from a missing Koopman invariance of the subspace~$V$ in the prediction of dynamical systems, let $P$ be a projection of $L^2(X,\mathbb{R})$ with $\ran P = \linspace{V}$. 
Then, the linear map $C \colon \linspace{V} \to \linspace{V}$ given by
\begin{equation*}
    C \psi \coloneqq P K \psi
\end{equation*}
defines a \emph{compression of $K$ on $\linspace{V}$}, which can be represented by an $(m \times m)$-matrix. If the subspace~$\linspace{V}$ is \emph{Koopman invariant}, i.e., the inclusion $K \psi \in \linspace{V}$ holds for all $\psi \in \linspace{V}$, the compression~$C$ corresponds to the restriction $\left. K \right|_{\linspace{V}}$ of $K$ to $\linspace{V}$. 
Otherwise, projection error is present, which stems from neglecting the component of $K \psi$ that is not in $\linspace{V}$.

The term \emph{Extended Dynamic Mode Decomposition} (EDMD) refers to a data-driven approach to compute the matrix representation of the compressed Koopman operator~$C$. 
First, the \emph{lift} $\Psi \colon X \to \mathbb{R}^m$ associated with the basis $(\psi_1, \dots, \psi_m)$ is defined to be
\begin{equation*}
    \Psi(x) \coloneqq \Matrix{\psi_1(x) & \dots & \psi_m(x)}^\top,
\end{equation*}
which we assume to be injective (one-to-one).
Next, consider the sampled states
\begin{equation} \label{eq:edmd_data}
    x_1, \dots, x_L \in X
\end{equation}
with $L \geq m$, and where the states $x_i$ are i.i.d.\ realisations of a random variable. 
Then, using this data set, define the bilinear form $b \colon L^2(X,\mathbb{R}) \times L^2(X,\mathbb{R}) \to \mathbb{R}$ by
\begin{equation*}
    b(\psi,\varphi) \coloneqq \frac{1}{L} \sum_{i=1}^L \psi(x_i) \varphi(x_i) .
\end{equation*}
Additionally, for a fixed time horizon $N \in \mathbb{N}$, we assume that we have, for each $i \in \{1,\ldots,L\}$, measured the state trajectories $x(t;x_i)$, $t \in \{0,1,\ldots,N\}$.
For each $t \in \{0, \dots, N\}$, the corresponding \emph{data matrix} is then defined to be
\begin{equation*}
    G_t \coloneqq \Matrix{\Psi(x(t,x_1)) & \dots & \Psi(x(t,x_L))} .
\end{equation*}
The EDMD algorithm in its standard form only uses the matrices $G_0$ and $G_1$, \ie information about the one-step behaviour of the dynamical system \eqref{eq:nonlinear_sys}%
\footnote{We comment on the EDMD algorithm that uses all matrices $G_t$ in \secref{sec:predictor}.}.
We assume that $G_0$ has full row rank and thus $b$ is non-degenerate on $\linspace{V}$, meaning the projection $P$ of $L^2(X,\mathbb{R})$ can be defined by the \enquote{orthogonality} property
\begin{equation*}
    \forall \varphi \in L^2(X,\mathbb{R}) \colon \forall \psi \in \linspace{V} \colon 
    b(\varphi - P \varphi, \psi) = 0 .
\end{equation*}
Hence, the projection is not orthogonal with respect to the usual inner product on $L^2(X,\mathbb{R})$, but with respect to the data-induced bilinear form $b$.
The matrix representation $\hat{K}$ of $C$ relative to the basis $(\psi_1, \dots, \psi_m)$ is
\begin{equation*}
    \hat{K} = (G_0 G_0^\top)^{-1} G_0 G_1^\top .
\end{equation*}
This representation has the alternative interpretation of being the unique minimiser to the least-squares problem
\begin{equation*}
    \min_{\tilde{K}} \left\{ \| G_1 - \tilde{K}^\top G_0 \|_\mathrm{F}^2 \right\} .
\end{equation*}
We refer to \cite{Williams2015,colbrook2024multiverse} for general references on EDMD and its variants. 

The compression can be used to propagate the state of~\eqref{eq:nonlinear_sys} in the lifted space $M \coloneqq \operatorname{im} \Psi$.
For each initial state $\bar{x}~\in~ X$, one can apply the Koopman operator component-wise to approximate the successor state $\bar{y} = f(\bar{x})$ with
\begin{equation*}
    \begin{aligned}
        \Psi(\bar{y}) = (\Psi \circ f)(\bar{x}) 
        &= ((K \psi_1)(\bar{x}), \dots, (K \psi_m)(\bar{x})) \\
        &\approx ((C \psi_1)(\bar{x}), \dots, (C \psi_m)(\bar{x})) \\
        &= \hat{K}^\top \Psi(\bar{x}),
\end{aligned}
\end{equation*}
meaning that $\hat{K}^\top \Psi(\bar{x})$ is an estimate of $\Psi(f(\bar{x}))$. 
Note that, while $\bar{y}$ can be recovered from $\Psi(\bar{y})$ since $\Psi$ is assumed to be injective, this may not be possible for $\hat{K}^\top \Psi(\bar{x})$.
The replacement of the Koopman operator $K$ with its compression $C$ introduces the aforementioned projection error, so that there may \emph{not} exist $\hat{x} \in X$ with $\Psi(\hat{x}) = \hat{K}^\top \Psi(\bar{x})$, \ie we have $\hat{K}^\top \Psi(\bar{x}) \notin M$. 
Therefore, before applying $\Psi^{-1}$, the estimate $\hat{K}^\top \Psi(\bar{x}) \in \mathbb{R}^m$ has to be re-projected to the manifold $M$, see~\cite{mauroy2019koopman,van2023reprojection}.
In \cite{vanGoor2025}, a \emph{geometric reprojection} based on a variance estimate for the lifted dynamics was introduced. 
We build on this probabilistic framework in the sequel and analyse the first and second moments of the prediction residual in the lifted space, which form the basis for the multi-step error characterisation in Section~\ref{sec:one_step_predictions}.

\section{Moment-Based Prediction Error Analysis in the Lifted Space} \label{sec:one_step_predictions}

We now analyse the prediction residual in the lifted space, focusing on its first and second moments as the basis for the multi-step error decomposition. To this end we analyse 
\begin{align*}
    z(t+1) &= A z(t), &
    z(0) &= \Psi(\bar{x})
\end{align*}
for initial states $\bar{x} \in X$.
The quantity to be designed is $A \in \rmspace{m}{m}$, \ie the lift $\Psi$ is fixed.
According to the previous discussion, a reasonable choice is $\hat{A} \coloneqq \hat{K}^\top$, but we carry out the analysis for a generic~$A$, occasionally commenting on the specific case $A = \hat{A}$. Hence, everything defined next will depend on the choice of $A$, but we suppress this dependence in the notation to improve readability.

We denote the trajectories of the lifted system by $z(\cdot,\bar{x}) \colon \mathbb{N}_0 \to \mathbb{R}^m$.
Of special interest is how well $z(\cdot,\bar{x})$ approximates $\Psi(x(\cdot,\bar{x}))$ and its zero-mean property with respect to the empirical measure defined by the samples used in the training process. To this end, we first look at the one-time-step error to gain structural insight, which we then leverage in the analysis of the multi-step and finally in the development of a novel approximation scheme.

Let $\mathcal{O}$ denote the subspace topology on $X \subset \mathbb{R}^n$, and $\mathcal{A} = \sigma(\mathcal{O})$ the corresponding Borel $\sigma$-algebra. 
Additionally, consider the states $x_1, \dots, x_L$ in \eqref{eq:edmd_data}, together with their associated \emph{Dirac measure} $\delta_i \colon \mathcal{A} \to \mathbb{R}$, which is given by
\begin{equation*}
    \delta_i(B) \coloneqq \begin{cases*}
        1 & $x_i \in B$ \\ 
        0 & otherwise
    \end{cases*}
\end{equation*}
Since $\delta_i(X) = 1$, each Dirac measure $\delta_i$ is a probability measure, and thus the \emph{empirical measure}
\begin{equation*}
    P_L \coloneqq \frac{1}{L} \sum_{i=1}^L \delta_i 
\end{equation*}
turns $(X,\mathcal{A},P_L)$ into a probability space. 
If $L$ is large enough, then for each event $B \in \mathcal{A}$ the frequency interpretation of probability suggests that $P_L(B) \simeq P_\star(B)$, i.e., $P_L$ approximately resembles its counterpart~$P_\star$ of the original probability space $(X,\mathcal{A},P_\star)$ used for sampling of initial states $x_i$. 
Thus, when we wish to compute a quantitative property for the probability measure $P_\star$ -- such as expectation or variance -- we may approximate it by computing that property for the empirical measure. 
We apply this philosophy throughout the work. 

For each $\bar{x} \in X$ we have
\begin{equation*}
    (\Psi \circ f)(\bar{x}) = A \Psi(\bar{x}) + \left( (\Psi \circ f)(\bar{x}) - A \Psi(\bar{x}) \right),
\end{equation*}
which motivates the introduction of the \emph{residual map} $R \colon X \to \mathbb{R}^m$ as
\begin{equation} \label{eq:residual}
        R(\bar{x}) \coloneqq (\Psi \circ f)(\bar{x}) - A \Psi(\bar{x}) .
\end{equation}
This describes the error that we make in the lifted space for one time step.
It is thus desirable to choose $A$ such that the residual map is \enquote{small} in some sense, and we will leverage the introduced notions to enable a probabilistic interpretation of the lifted dynamics and, then, define a \enquote{small}.

Since $R$ is continuous with respect to the subspace topo\-logy, it is necessarily $\mathcal{A}$-measurable.
This means that the residual map can be considered a random variable with values in $\mathbb{R}^m$. 
We will analyse its first and second moments in the following.

First, we investigate the residual's expectation.
It will be convenient to introduce for $t \in \{0, \dots, N\}$ the arithmetic means
\begin{equation}\label{eq:means:arithmetic}
    \nu_t \coloneqq \frac{1}{L} \sum_{i=1}^L \Psi(x(t,x_i)) .
\end{equation}
of the collected lifted states at time $t$.
We can state the following immediate consequence.
\begin{lemma} \label{lemma:residual_expectation_vanish}
    The first moment $\exv{R}$ of the residual vanishes if and only if $\nu_1 = A \nu_0$. 
\end{lemma}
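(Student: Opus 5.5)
The plan is to compute $\exv{R}$ directly. The expectation is taken with respect to the empirical measure $P_L$ on $(X,\mathcal{A})$, as the paper prescribes, and then compared with the definition of the means in \eqref{eq:means:arithmetic}. The argument is a linearity computation; the only point needing care is how integration against a finite sum of Dirac measures behaves.

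First, I will note that $R$ is $\mathcal{A}$-measurable and bounded, since it is continuous on the compact set $X$. Hence it is integrable with respect to $P_L$, and $\exv{R} = \int_X R \,\mathrm{d}P_L$, taken componentwise in $\mathbb{R}^m$. By linearity of the integral in the measure, and because $\int_X g \,\mathrm{d}\delta_i = g(x_i)$ for every measurable $g$, this gives
\begin{equation*}
    \exv{R} = \frac{1}{L}\sum_{i=1}^L R(x_i).
\end{equation*}

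Second, I will insert the definition \eqref{eq:residual}. Using $(\Psi\circ f)(x_i) = \Psi(x(1,x_i))$ and $\Psi(x_i) = \Psi(x(0,x_i))$, and pulling the constant matrix $A$ out of the finite sum, I obtain
\begin{equation*}
    \exv{R} = \frac{1}{L}\sum_{i=1}^L \Psi(x(1,x_i)) - A\,\frac{1}{L}\sum_{i=1}^L \Psi(x(0,x_i)) = \nu_1 - A\nu_0 .
\end{equation*}
Both directions of the equivalence then follow at once, since $\exv{R}=0$ holds exactly when $\nu_1 = A\nu_0$.

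I do not expect a real obstacle. The one point to state explicitly is that the expectation in the lemma refers to $P_L$ and not to the sampling measure $P_\star$. I would also remark, though it is not needed for the proof, that for $A=\hat{A}$ the condition $\nu_1 = A\nu_0$ holds automatically whenever the constant function lies in $\linspace{V}$. This follows from the $b$-orthogonality of the projection $P$ tested against $\psi \equiv 1$.
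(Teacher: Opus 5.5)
Your proof is correct and is essentially the paper's own argument: evaluate the expectation against the empirical measure $P_L$ as the sample average $\frac{1}{L}\sum_{i=1}^L R(x_i)$, insert the definition of $R$, and pull $A$ out of the finite sum to obtain $\nu_1 - A\nu_0$. Your side remark is also sound. Testing the $b$-orthogonality of $P$ against $\psi \equiv 1$ with $\varphi = \psi_j \circ f$ yields $\nu_1 = \hat{A}\nu_0$ directly, which is a shorter route to Proposition~\ref{prop:edmd_residual_expectation} than the paper's Schur-complement computation.
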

\begin{proof}
    We can express the expectation as
    \begin{align*}
        \exv{R} &= \int R \, \mathrm{d} P_L = \frac{1}{L} \sum_{i=1}^L R(x_i) 
        = \frac{1}{L}\sum_{i=1}^L \Big[\Psi(y_i) - A \Psi(x_i)\Big] \\
        &= \frac{1}{L} \left( \sum_{i=1}^L \Psi(y_i) - A \sum_{i=1}^L \Psi(x_i) \right) = \nu_1 - A \nu_0 , 
    \end{align*}
    which directly implies the assertion.
\end{proof}
Let us denote the subset of $\rmspace{m}{m}$ of matrices $A$ that lead to $\exv{R} = 0$ by $E_0$, \ie we have
\begin{equation}\label{eq:matrix:E0}
    E_0 = \{ A \in \rmspace{m}{m} \mid \nu_1 = A \nu_0 \} .
\end{equation}
As we establish next, under a mild and standard assumption on the dictionary, the predictor $\hat{A} = \hat{K}^\top$ obtained from the EDMD algorithm belongs to $E_0$, which has already been noted in~\cite{vanGoor2025}.
\begin{proposition}\label{prop:edmd_residual_expectation}
    Suppose that $\psi_j \equiv 1$ for some member $\psi_j$ of the chosen basis $(\psi_1, \dots, \psi_m)$ for $\linspace{V}$.
    Then, $\hat{A} \in E_0$. 
\end{proposition}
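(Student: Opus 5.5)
By \lemmaref{lemma:residual_expectation_vanish}, it suffices to show that $\nu_1 = \hat{A}\nu_0$. I would get this from the normal equations of the EDMD least-squares problem, tested against the constant observable.

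First, I rewrite the means in matrix form. Let $\mathbf{1} \in \mathbb{R}^L$ be the all-ones vector. Then $\nu_t = \frac{1}{L} G_t \mathbf{1}$ for $t \in \{0,1\}$.

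Next, I use the definition of $\hat{K}$. It gives $G_0 G_0^\top \hat{K} = G_0 G_1^\top$. Transposing yields $\hat{A}\, G_0 G_0^\top = G_1 G_0^\top$, which is equivalent to
\begin{equation*}
    (G_1 - \hat{A} G_0)\, G_0^\top = 0 .
\end{equation*}
In words, every row of $G_0$ is orthogonal in $\mathbb{R}^L$ to every row of the residual matrix $G_1 - \hat{A} G_0$. This is the matrix form of the $b$-orthogonality defining $P$.

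Now I bring in the hypothesis $\psi_j \equiv 1$. The $j$-th row of $G_0$ is $(\psi_j(x_1),\dots,\psi_j(x_L)) = \mathbf{1}^\top$. Multiplying the displayed identity by the $j$-th unit vector $e_j \in \mathbb{R}^m$ from the right extracts the $j$-th column of $G_0^\top$, namely $\mathbf{1}$. This gives
\begin{equation*}
    (G_1 - \hat{A} G_0)\mathbf{1} = 0 .
\end{equation*}
Dividing by $L$ gives $\nu_1 - \hat{A}\nu_0 = 0$, and hence $\hat{A} \in E_0$ by \eqref{eq:matrix:E0}.

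Equivalently, in the paper's operator language, the statement is that $b(K\psi_k - P K\psi_k, \mathbf{1}) = 0$ for every $k$, which holds because $\mathbf{1} = \psi_j \in V$.

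The argument is otherwise mechanical. The one point needing care is the transposition bookkeeping: $\hat{A} = \hat{K}^\top$, and $\hat{K}$ acts on coefficient vectors. I would check it against the least-squares form $\min \|G_1 - \tilde{K}^\top G_0\|_{\mathrm{F}}^2$, whose first-order condition is exactly $(G_1 - \hat{K}^\top G_0)G_0^\top = 0$, so the computation above is consistent.
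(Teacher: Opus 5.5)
Your proof is correct, and it handles the key step by a different and shorter route than the paper. Both arguments reduce the claim to $\nu_1 = \hat{A}\nu_0$ via \lemmaref{lemma:residual_expectation_vanish}. The paper then proves $G_0^\top (G_0 G_0^\top)^{-1} G_0 \mathbf{1} = \mathbf{1}$ by a block computation: it places the constant observable in the last row without loss of generality and inverts $G_0 G_0^\top$ in block form using the Schur complement of $\hat{G}\hat{G}^\top$. This effectively establishes $(G_0 G_0^\top)^{-1} G_0 \mathbf{1} = e_m$.

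You instead take the normal equations $(G_1 - \hat{A} G_0) G_0^\top = 0$ and test them against $e_j$, using $\mathbf{1} = G_0^\top e_j$. The same observation would collapse the paper's computation to one line, since $(G_0 G_0^\top)^{-1} G_0 \mathbf{1} = (G_0 G_0^\top)^{-1} G_0 G_0^\top e_j = e_j$.

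What your route buys:
\begin{itemize}
\item It needs nothing beyond the standing full-row-rank assumption on $G_0$.
\item It avoids both the reordering of the basis and the explicit block inverse.
\item It makes the link to the $b$-orthogonality defining $P$ transparent.
\item It extends verbatim to a weaker hypothesis: it suffices that some element of $V$ equals $1$ at all sample points, i.e., that $\mathbf{1}$ lies in the row space of $G_0$, rather than that the constant function is itself a basis element.
\end{itemize}
The paper's computation yields an explicit form of $(G_0 G_0^\top)^{-1}$, but the proposition does not need it.
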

\begin{proof}
    Let $\mathbf{1} \in \mathbb{R}^L$ denote the vector of ones. 
    By definition of the EDMD algorithm, $\hat{A} = G_1 G_0^\top (G_0 G_0^\top)^{-1}$, which gives
    \begin{equation*}
        \hat{A} \nu_0 = \frac{1}{L} \hat{A} G_0 \mathbf{1} = \frac{1}{L} G_1 G_0^\top (G_0 G_0^\top)^{-1} G_0 \mathbf{1} .
    \end{equation*}
    Hence, if we can show $G_0^\top (G_0 G_0^\top)^{-1} G_0 \mathbf{1} = \mathbf{1}$, the assertion follows from \lemmaref{lemma:residual_expectation_vanish}.
    
    Without loss of generality, assume that $\psi_m$ is the constant observable, \ie $\psi_m \equiv 1$. 
    Then, the data matrix has the form
    \begin{equation*}
        G_0 = \Matrix{\hat{G} \\ \mathbf{1}^\top},
    \end{equation*}
    where $\hat{G}$ denotes the first $m-1$ rows of $G_0$.
    Therefore, using the Schur complement
    \begin{equation*}
        S_\mathrm{c} = L - \mathbf{1}^\top \hat{G}^\top (\hat{G} \hat{G}^\top)^{-1} \hat{G} \mathbf{1} 
    \end{equation*}
    of $\hat{G} \hat{G}^\top$ in $G_0 G_0^\top$, the inverse of $G_0 G_0^\top$ can be written as $(G_0 G_0^\top)^{-1} = V^\top H V$, with
    \begin{align*}
        V &= \Matrix{
            I & 0 \\
            -\mathbf{1}^\top \hat{G}^\top (\hat{G} \hat{G}^\top)^{-1} & 1 
        }, &
        H &= \Matrix{
            (\hat{G} \hat{G}^\top)^{-1} \\ & S_\mathrm{c}^{-1}
        },
    \end{align*}
    Then, straightforward calculations reveal
    \begin{align*}
        &V G_0 \mathbf{1} = \Matrix{\hat{G} \mathbf{1} \\ S_\mathrm{c}}, &
        H V G_0 \mathbf{1} &= \Matrix{(\hat{G} \hat{G}^\top)^{-1} \hat{G} \mathbf{1} \\ 1}, \\
        &V^\top H V G_0 \mathbf{1} = \Matrix{\mathbf{0} \\ 1} ,
    \end{align*}
    which implies $G_0^\top (G_0 G_0^\top)^{-1} G_0 \mathbf{1} = \mathbf{1}$, as required.
\end{proof}

In conclusion, the finite-dictionary and finite-data Koopman compression approximates the true lifted dynamics in the sense of matching the first and second moments of the prediction residual. Under the assumption that the residual has zero expectation, it is possible to derive, for a general $A$, a closed-form expression for the variance of the residual, which depends quadratically on~$A$.
\begin{corollary} \label{coro:residual_variance}
    Let $A \in E_0$ be arbitrary.
    Then, the second moment $\Sigma \coloneqq \var{R}$ of $R$ satisfies
    \begin{equation*}
        \Sigma = \frac{1}{L} \left( G_1 - A G_0 \right) \left( G_1 - A G_0 \right)^\top .
    \end{equation*}
    Additionally, the trace of the variance is given by
    \begin{equation*}
        \tr(\Sigma) = \frac{1}{L} \| G_1 - A G_0 \|_\mathrm{F}^2 .
    \end{equation*}
\end{corollary}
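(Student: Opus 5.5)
The plan is to compute the variance directly with respect to the empirical measure $P_L$, as in the proof of \lemmaref{lemma:residual_expectation_vanish}, using the convention that the covariance of an $\mathbb{R}^m$-valued random variable is
\begin{equation*}
\var{R} = \exv{(R - \exv{R})(R - \exv{R})^\top}.
\end{equation*}
Since $A \in E_0$, \lemmaref{lemma:residual_expectation_vanish} gives $\exv{R} = 0$. Hence the centering term drops out and $\Sigma = \exv{R R^\top}$. This is the only place where the assumption $A \in E_0$ enters.

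Next, I write the expectation against $P_L$ as the finite sum
\begin{equation*}
\Sigma = \int R R^\top \, \mathrm{d} P_L = \frac{1}{L}\sum_{i=1}^L R(x_i) R(x_i)^\top .
\end{equation*}
By \eqref{eq:residual}, $R(x_i) = \Psi(x(1,x_i)) - A\Psi(x(0,x_i))$, and this is exactly the $i$-th column of $G_1 - A G_0$. The key identity is that, for any matrix $M$ with columns $m_i$, one has $\sum_i m_i m_i^\top = M M^\top$. Applying it to $M = G_1 - A G_0$ yields the first formula.

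For the trace, I use linearity of the trace and $\tr(MM^\top) = \tr(M^\top M) = \iprod{M}{M} = \|M\|_\mathrm{F}^2$, with the inner product from the Notation paragraph. This gives $\tr(\Sigma) = \frac{1}{L}\|G_1 - A G_0\|_\mathrm{F}^2$.

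No step is a real obstacle. The only point needing care is the convention: it must be the population-style normalisation $1/L$ with respect to $P_L$, not the unbiased $1/(L-1)$. I must also make explicit that without $A\in E_0$ one would only obtain the second raw moment, from which $\exv{R}\exv{R}^\top$ would have to be subtracted.
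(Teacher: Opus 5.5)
Your proof is correct and follows the paper's argument. You use $A\in E_0$ to get $\exv{R}=0$, write $\Sigma$ as the empirical average of $R(x_i)R(x_i)^\top$, identify this with $\frac{1}{L}(G_1-AG_0)(G_1-AG_0)^\top$, and take the trace; the only difference is that you invoke the column outer-product identity directly instead of expanding into the four terms $G_1G_1^\top - G_1G_0^\top A^\top - AG_0G_1^\top + AG_0G_0^\top A^\top$ as the paper does, which is slightly cleaner.
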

\begin{proof}
    Using the assumption that $A \in E_0$ and thus $\exv{R} = 0$, the formula for the variance follows directly from the computation
    \begin{align*}
        \Sigma &= \exv{\left( R - \exv{R} \right) \left( R - \exv{R} \right)^\top}
        = \exv{R R^\top} \\
        &= \frac{1}{L} \sum_{i=0}^L \left( \Psi(y_i) - A \Psi(x_i) \right) \left( \Psi(y_i) - A \Psi(x_i) \right)^\top \\
        &= \frac{1}{L} ( \sum_{i=0}^L \Psi(x_i) \Psi(y_i)^\top - \sum_{i=0}^L \Psi(y_i) \Psi(x_i)^\top A^\top \\
        &\quad- \sum_{i=0}^L A \Psi(x_i) \Psi(y_i)^\top + \sum_{i=0}^L A \Psi(x_i) \Psi(x_i)^\top A^\top ) \\
        &= \frac{1}{L} ( G_1 G_1^\top - G_1 G_0^\top A^\top - A G_0 G_1^\top 
        + A G_0 G_0^\top A^\top ) ,
    \end{align*}
    where we have used the definition of $G_0$ and $G_1$. 
   Now, the claimed expression follows directly from the cyclic property of the trace operator.
\end{proof}

\section{Multi-Step Predictions and Prediction Errors} \label{sec:multi_step_predictions}

In light of this result, and under the assumption that the basis for $\linspace{V}$ contains a constant observable, the EDMD algorithm can be interpreted as selecting the element of $E_0$ that minimises $\tr(\var{R})$. Note that this interpretation hinges on the fact that $\hat{A}$ lies in $E_0$; otherwise, the expression for the variance changes. Moreover, this result does not directly transfer to multi-step predictions. We will address this limitation next.

We consider, for $t \in \{1,\dots,N\}$, the family of random variables $R_t \colon X \to \mathbb{R}^m$, where $R_t$ is given by
\begin{equation} \label{eq:multi_step_residual}
    R_t(\bar{x}) \coloneqq \Psi(x(t, \bar{x})) - z(t, \bar{x}) ,
\end{equation}
describing the prediction error in the lifted space at time $t$. 
Our goal is to find $A$ such that, for each $t \in \{1, \dots, N\}$,
\begin{enumerate}
    \item[(i)] 
        the first moment $\exv{R_t}$ of the residual $R_t$ vanishes at time $t$. 
    \item[(ii)]
        The second moment $\var{R_t}$ of the residual $R_t$ at time $t$ is minimised. 
\end{enumerate}
Consider the expectation 
\begin{equation*}
    \mu_t \coloneqq \exv{R_t} .
\end{equation*}
To analyse how $\mu_t$ evolves over time, it is convenient to introduce, for $t \in \{0, \dots,N-1\}$, the family of random variables $D_t \colon X \to \mathbb{R}^m$ by
\begin{equation}\label{eq:Dt}
    D_t(\bar{x}) \coloneqq \Psi(x(t+1,\bar{x})) - A \Psi(x(t,\bar{x})) = R(x(t,\bar{x})) ,
\end{equation}
which describe the \emph{local error} at time $t$.
While the residual~$R_t$ describes the overall error at time~$t$, the local error describes the newly added error that would be added at time $t$ if the lifted system was initialised at the correct state $\Psi(x(t,\bar{x}))$. 
In particular, $D_0 = R_1 = R$ holds, where the residual map $R$ is defined in \eqref{eq:residual}.

The following propostion shows how $\mu_{t+1}$ can be described as a function of $\mu_t$ and the expectation of the local error,
\begin{equation*}
    \beta_t \coloneqq \exv{D_t} .
\end{equation*}
This allows us to analyse how the expectation of the residual evolves.
\begin{proposition} \label{prop:multi_step_residual}
    For each $t \in \{1, \dots,N-1\}$, the expectation $\mu_t$ of the residual $R_t$ obeys
    \begin{equation*}
        \mu_{t+1} = A \mu_t + \beta_t .
    \end{equation*}
\end{proposition}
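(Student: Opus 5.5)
The plan is to first prove a pointwise recursion for the random variables $R_t$ and then take expectations with respect to the empirical measure $P_L$, using linearity of the integral. No earlier result is needed beyond the definitions \eqref{eq:multi_step_residual} and \eqref{eq:Dt}.

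\textbf{Step 1: the pointwise identity.} Fix $\bar{x} \in X$ and $t \in \{1,\dots,N-1\}$. The lifted system satisfies $z(t+1,\bar{x}) = A z(t,\bar{x})$. Adding and subtracting $A\Psi(x(t,\bar{x}))$ gives
\begin{align*}
    R_{t+1}(\bar{x}) &= \Psi(x(t+1,\bar{x})) - A z(t,\bar{x}) \\
    &= \Psi(x(t+1,\bar{x})) - A \Psi(x(t,\bar{x})) + A\bigl(\Psi(x(t,\bar{x})) - z(t,\bar{x})\bigr) \\
    &= D_t(\bar{x}) + A R_t(\bar{x}).
\end{align*}
This holds for every $\bar{x}$, so $R_{t+1} = A R_t + D_t$ as functions $X \to \mathbb{R}^m$.

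\textbf{Step 2: taking expectations.} All maps involved are continuous compositions of $\Psi$, $f$ and linear maps, so they are measurable and integrable. With respect to $P_L$ the expectation is just the finite average $\frac1L\sum_{i=1}^L(\cdot)(x_i)$, as in the proof of \lemmaref{lemma:residual_expectation_vanish}. The fixed matrix $A$ therefore commutes with the expectation, and $\exv{AR_t + D_t} = A\exv{R_t} + \exv{D_t}$. This yields $\mu_{t+1} = A\mu_t + \beta_t$.

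There is no real obstacle here. The only point needing care is to keep track of which quantities are evaluated along the true trajectory and which along the lifted one when inserting the intermediate term $A\Psi(x(t,\bar{x}))$. As a consistency check, the same identity at $t=0$ would give $R_1 = D_0$, because $R_0 = 0$ by $z(0)=\Psi(\bar{x})$. This matches the remark that $D_0 = R_1 = R$.
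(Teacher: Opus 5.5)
Your proposal is correct and follows essentially the same route as the paper: establish the pointwise identity $R_{t+1} = A R_t + D_t$ by rewriting $A z(t,\bar{x})$ in terms of $\Psi(x(t,\bar{x}))$ and $R_t(\bar{x})$, then pass to $\mu_{t+1} = A\mu_t + \beta_t$ by linearity of the (finite-sum) expectation under $P_L$. The only cosmetic difference is that the paper goes through the one-step residual map, $R(x(t,\bar{x}))$, before identifying it with $D_t$, whereas you use the defining expression of $D_t$ directly.
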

\begin{proof}
    Fix any $t \in \{1,\dots,N-1\}$ and let $\bar{x} \in X$ be arbitrary. 
    Note that we have $x(t+1,\bar{x}) = f(x(t,\bar{x}))$ and $z(t+1,\bar{x}) = A z(t,\bar{x})$ by definition.
    Hence, by definition of the residuals, we obtain
    \begin{align*}
        R_{t+1}(\bar{x}) &= \Psi(x(t+1),\bar{x}) - z(t+1,\bar{x}) \\
        &= (\Psi \circ f)(x(t,\bar{x})) - A z(t,\bar{x}) \\
        &\overset{\eqref{eq:multi_step_residual}}{=} (\Psi \circ f)(x(t,\bar{x})) - A \left( \Psi(x(t,\bar{x})) - R_t(\bar{x}) \right) \\
        &= \left( (\Psi \circ f)(x(t,\bar{x})) - A  \Psi(x(t,\bar{x})) \right) + A R_t(\bar{x}) \\
        &\overset{\eqref{eq:residual}}{=} R(x(t,\bar{x})) + A R_t(\bar{x}) 
        = A R_t(\bar{x}) + D_t(\bar{x}).
    \end{align*}
    The result follows from linearity of the expectation.
\end{proof}

This result has an interesting interpretation:
the expected residual follows a linear dynamic given by the chosen predictor $A$ with an additional \enquote{disturbance} $\beta_t$ at time~$t$. 
This allows for the application of techniques from linear system theory to analyse the expected residual error. 

\begin{corollary} \label{coro:expectation_residual_multi_vanish}
    Given any choice of $A \in \rmspace{m}{m}$, then
    \begin{align*}
        \mu_1 = \dots = \mu_N &= 0 &
        &\iff &
        \beta_0 = \dots = \beta_{N-1} &= 0 
    \end{align*}
\end{corollary}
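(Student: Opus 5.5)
The plan is to combine the recursion of \propref{prop:multi_step_residual} with the base case $\mu_1 = \beta_0$ and argue by induction in both directions. For the base case, note that $R_1(\bar{x}) = \Psi(x(1,\bar{x})) - A\Psi(\bar{x}) = D_0(\bar{x})$, since $z(1,\bar{x}) = A z(0,\bar{x}) = A\Psi(\bar{x})$. Taking expectations with respect to $P_L$ gives $\mu_1 = \beta_0$. This identity was already noted after \eqref{eq:Dt} as $D_0 = R_1 = R$.

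For the direction \enquote{$\Leftarrow$}, I assume $\beta_0 = \dots = \beta_{N-1} = 0$. Then $\mu_1 = \beta_0 = 0$. If $\mu_t = 0$ for some $t \in \{1,\dots,N-1\}$, \propref{prop:multi_step_residual} yields $\mu_{t+1} = A\mu_t + \beta_t = A\cdot 0 + 0 = 0$. Induction on $t$ then gives $\mu_1 = \dots = \mu_N = 0$, and this holds for any $A \in \rmspace{m}{m}$.

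For the direction \enquote{$\Rightarrow$}, I assume $\mu_1 = \dots = \mu_N = 0$. The base case gives $\beta_0 = \mu_1 = 0$. For $t \in \{1,\dots,N-1\}$, I rearrange \propref{prop:multi_step_residual} into $\beta_t = \mu_{t+1} - A\mu_t$. Both terms on the right vanish by assumption, so $\beta_t = 0$. Hence $\beta_0 = \dots = \beta_{N-1} = 0$, again without any assumption on $A$.

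The only point needing care is the index range. \propref{prop:multi_step_residual} is stated only for $t \geq 1$, so the $t=0$ link between $\mu_1$ and $\beta_0$ must be supplied separately by the base-case computation above. Equivalently, one could set $\mu_0 \coloneqq 0$, because $R_0 \equiv 0$ as $z(0,\bar{x}) = \Psi(\bar{x})$, and check that the recursion also holds at $t=0$. Beyond this there is no real obstacle: the equivalence is a direct consequence of the recursion being an invertible relation between the sequences $(\mu_t)$ and $(\beta_t)$ once $\mu_0 = 0$ is fixed.
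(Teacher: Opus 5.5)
Your proposal is correct and follows essentially the same route as the paper: you obtain $\mu_1 = \beta_0$ from $R_1 = D_0$, use the recursion $\mu_{t+1} = A\mu_t + \beta_t$ forwards for one direction, and solve it for $\beta_t$ for the other. The only difference is cosmetic: for the direction $\Leftarrow$ the paper writes out the unrolled sum $\mu_t = \sum_{k=0}^{t-1} A^{t-1-k}\beta_k$ instead of inducting step by step, which is the same argument.
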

\begin{proof}
    The proof follows essentially from observing that $R_1 = D_0$, which gives $\mu_1 = \beta_0$. 
    Then, the solution to the linear system $\mu_{t+1} = A \mu_t + \beta_t$ yields for each $t \in \{1, \dots, N\}$
    \begin{equation*}
        \mu_t = A^{t-1} \mu_1 + \sum_{k=1}^{t-1} A^{t-1-k} \beta_k = \sum_{k=0}^{t-1} A^{t-1-k} \beta_k ,
    \end{equation*}
    from which we can immediately see that $\beta_0 = \dots = \beta_{N-1} = 0$ implies $\mu_1 = \dots = \mu_N = 0$. 

    For the reverse direction, note that for $0 = \mu_1 = \beta_0$, the expression for $\mu_t$ implies for each $t \in \{1,\dots,N-1\}$
    \begin{equation*}
        0 = \mu_{t+1} = \beta_{t} ,
    \end{equation*}
    as claimed.
\end{proof}

Corollary~\ref{coro:expectation_residual_multi_vanish} shows that the \emph{overall} residuals vanish in expectation for \emph{all times} $t \in \{1,\ldots,N\}$ if and only if the \emph{local errors} vanish in expectation. 
Thanks to the linear dependence of $\beta_t$ on $A$, we may represent this relationship by linear conditions on~$A$. 
The following Corollary extends \lemmaref{lemma:residual_expectation_vanish} to multiple steps.

\begin{corollary} \label{coro:expectation_local_error_vanish}
    For each $t \in \{0, \dots, N-1\}$, the first moment $\beta_t = \exv{D_t}$ of the local error at time $t$ vanishes if and only if $\nu_{t+1} = A \nu_t$.
\end{corollary}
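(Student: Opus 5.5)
The plan is to mirror the computation in the proof of \lemmaref{lemma:residual_expectation_vanish}, replacing the one-step data pair $(x_i, y_i)$ with the pair $(x(t,x_i), x(t+1,x_i))$. The starting point is the definition \eqref{eq:Dt} of the local error, $D_t(\bar{x}) = \Psi(x(t+1,\bar{x})) - A\Psi(x(t,\bar{x}))$. Here $D_t$ is a continuous, hence $\mathcal{A}$-measurable, map $X \to \mathbb{R}^m$, because it is a composition of the continuous maps $f^t$, $f^{t+1}$ and $\Psi$. Its expectation with respect to the empirical measure $P_L$ is therefore well defined.

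The first step is to write the expectation as a finite sum over the Dirac masses:
\begin{equation*}
    \beta_t = \int D_t \, \mathrm{d}P_L = \frac{1}{L}\sum_{i=1}^L \Big[ \Psi(x(t+1,x_i)) - A\Psi(x(t,x_i)) \Big].
\end{equation*}
The second step uses linearity of the sum and pulls $A$ out, which gives
\begin{equation*}
    \beta_t = \frac{1}{L}\sum_{i=1}^L \Psi(x(t+1,x_i)) - A\,\frac{1}{L}\sum_{i=1}^L \Psi(x(t,x_i)).
\end{equation*}
The two averages are exactly the arithmetic means defined in \eqref{eq:means:arithmetic}, so $\beta_t = \nu_{t+1} - A\nu_t$. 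Equivalently, $\beta_t = \frac{1}{L}(G_{t+1} - A G_t)\mathbf{1}$, using the data matrices.

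The final step reads off the equivalence: $\beta_t = 0$ holds if and only if $\nu_{t+1} = A\nu_t$. Since $t \in \{0,\dots,N-1\}$, the indices $t+1 \le N$ stay within the range of measured trajectories, so both means are defined. The case $t=0$ recovers \lemmaref{lemma:residual_expectation_vanish}, because $D_0 = R$.

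There is no real obstacle here. The only point needing care is that the expectation is taken with respect to $P_L$ over the initial samples $x_i$ rather than over the states $x(t,x_i)$. Since $D_t$ is defined as a function of the initial state $\bar{x}$, integrating against $P_L$ evaluates it exactly at the measured trajectory points, and this is what makes the means $\nu_t$ appear.
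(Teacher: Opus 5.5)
Your proposal is correct and follows essentially the same route as the paper's proof: you write $\beta_t$ as the empirical average $\frac{1}{L}\sum_{i=1}^L \bigl[\Psi(x(t+1,x_i)) - A\Psi(x(t,x_i))\bigr]$, identify it as $\nu_{t+1} - A\nu_t$, and read off the equivalence. Your remarks on measurability and on the index range are harmless additions that the paper leaves implicit.
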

\begin{proof}
    Fix any $t \in \{0, \dots, N-1\}$.
    Then
    \begin{equation*}
        \beta_t = \frac{1}{L} \sum_{i=1}^L \Psi(x(t+1,x_i)) - A \Psi(x(t,x_i)) 
        = \nu_{t+1} - A \nu_t ,
    \end{equation*}
    and the assertion follows.
\end{proof}

We thus introduce for each $t \in \{0, \dots,N-1\}$ the set
\begin{equation*}
    E_t = \{ A \in \rmspace{m}{m} \mid \forall k \in \{0, \dots, t\} \colon \nu_{k+1} = A \nu_k \} ,
\end{equation*}
so that $\beta_0 = \dots = \beta_t = 0$ if and only if $A \in E_t$.
Note that this definition includes our previously defined $E_0$, and we have the inclusions $E_{t+1} \subseteq E_t$. 
Additionally, $A \in E_{t-1}$ is equivalent to $\mu_1 = \dots = \mu_t = 0$.

This completes the theoretical analysis of unbiasedness conditions.

\section{Parametrisation and Degrees of Freedom}
Having characterised the conditions under which the multi-step residual is unbiased, we now turn to the constructive aspect of the problem. In this section, we parametrise the set $E_{N-1}$, identify the degrees of freedom available in choosing $A$, and exploit them to design a predictor that enforces unbiasedness while reducing variance.
\subsection{Degrees of Freedom}
To do so, we define the matrices
\begin{align*}
    B &= \Matrix{ \nu_0 & \dots & \nu_{N-1} } , &
    C &= \Matrix{ \nu_1 & \dots & \nu_{N} }.
\end{align*}
Assume that $m \geq N$ and take the QR decomposition\footnote{We assume here that $B$ has full rank for simplicity, but this can be generalised.}
\begin{equation*}
    B = Q \bar{R} = \Matrix{Q_1 & Q_2} \Matrix{\hat{R} \\ 0} = Q_1 \hat{R},
\end{equation*}
with $\hat{R} \in \rmspace{N}{N}$. 
\begin{proposition} \label{prop:zero_expectation_predictor}
    Assume that $m \geq N$.
    Then $\mu_1 = \dots = \mu_N = 0$ if and only if $A = C \hat{R}^{-1} Q_1^\top + A_2 Q_2^\top$ for some $A_2 \in \rmspace{m}{(m-N)}$.
\end{proposition}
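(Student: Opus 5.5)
By Corollary~\ref{coro:expectation_residual_multi_vanish} and Corollary~\ref{coro:expectation_local_error_vanish}, $\mu_1 = \dots = \mu_N = 0$ holds if and only if $\nu_{t+1} = A \nu_t$ for all $t \in \{0,\dots,N-1\}$, \ie if and only if $A \in E_{N-1}$. Stacking these $N$ vector equations column by column, this is the single matrix equation $AB = C$. The statement therefore reduces to describing the solution set of the linear equation $AB = C$ in the unknown $A$, using the QR decomposition of $B$.

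The first step is to rewrite $AB = C$ in coordinates adapted to $Q$. Since $Q = \Matrix{Q_1 & Q_2}$ is orthogonal, every $A \in \rmspace{m}{m}$ can be written uniquely as
\begin{equation*}
    A = A Q Q^\top = A_1 Q_1^\top + A_2 Q_2^\top,
\end{equation*}
with $A_1 \coloneqq A Q_1 \in \rmspace{m}{N}$ and $A_2 \coloneqq A Q_2 \in \rmspace{m}{(m-N)}$. Because $B = Q_1 \hat{R}$, and $Q_1^\top Q_1 = I$, $Q_2^\top Q_1 = 0$, we obtain
\begin{equation*}
    AB = (A_1 Q_1^\top + A_2 Q_2^\top) Q_1 \hat{R} = A_1 \hat{R}.
\end{equation*}
Hence $AB = C$ is equivalent to $A_1 \hat{R} = C$.

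The second step uses the full-rank assumption on $B$ (with $m \geq N$). It makes $\hat{R} \in \rmspace{N}{N}$ invertible, since $B$ has full column rank $N$ and $Q_1$ has orthonormal columns. Consequently $A_1\hat{R} = C$ holds if and only if $A_1 = C \hat{R}^{-1}$, while $A_2$ is left entirely unconstrained.

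Combining the two steps gives both directions. If $A$ yields vanishing means, then $A = C\hat{R}^{-1}Q_1^\top + A_2 Q_2^\top$ with $A_2 = AQ_2$. Conversely, any $A$ of this form satisfies $AB = C\hat{R}^{-1}Q_1^\top Q_1\hat{R} + A_2 Q_2^\top Q_1 \hat{R} = C$, and so $A \in E_{N-1}$.

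There is no real obstacle; the only points needing care are the invertibility of $\hat{R}$ and the orthogonality relations among the blocks of $Q$. The degenerate case $m = N$, where $Q_2$ is empty, is covered by the same argument.
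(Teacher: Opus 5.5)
Your proposal is correct and follows the paper's proof essentially step for step. Like the paper, you reduce the claim to $AB = C$ via the two corollaries, write $A = A_1 Q_1^\top + A_2 Q_2^\top$ with $A_1 = AQ_1$ and $A_2 = AQ_2$, observe that $AB = A_1\hat{R}$, and solve for $A_1$. You are slightly more explicit than the paper on two points: you state that invertibility of $\hat{R}$ rests on the footnoted full-rank assumption on $B$, and you check the converse direction directly.
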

\begin{proof}
    By combining \cororef{coro:expectation_residual_multi_vanish} and \cororef{coro:expectation_local_error_vanish} and using the newly defined matrices $B$ and $C$, we see that $\mu_1 = \dots = \mu_N = 0$ is equivalent to
    \begin{align*}
        A &\in E_{N-1} &
        &\iff &
        A B = C.
    \end{align*}
    We thus seek to parametrise the matrices $A$ that satisfy $A B = C$.
    Since $Q$ is orthogonal, we can decompose any $A \in \rmspace{m}{m}$ as $A = A_1 Q_1^\top + A_2 Q_2^\top$, by setting $A_1 = A Q_1$ and $A_2 = A Q_2$.
    This decomposition gives
    \begin{equation*}
        A B = A_1 Q_1^\top Q_1 \hat{R} + A_2 Q_2^\top Q_1 \hat{R} = A_1 \hat{R} ,
    \end{equation*}
    and hence $A B = C$ if and only if $A = A_1 Q_1^\top + A_2 Q_2^\top$ with $A_1 = C \hat{R}^{-1}$.
\end{proof}

The decomposition $A = A_1 Q_1^\top + A_2 Q_2^\top$ reveals the degrees of freedom available in selecting $A \in E_{N-1}$.
In this basis, one \emph{must} choose $A_1 = C \hat{R}^{-1}$, and the remaining degrees of freedom are given by $A_2$.

\subsection{Minimising Variance}

The analysis of the previous subsection has led to  numerically tractable conditions to ensure $A \in E_{N-1}$. 
In particular, the first moment of the residual $\exv{R_t}$ vanishes.
Next, we want to leverage the remaining degrees of freedom to address our second design criterion, namely, to minimise the second moment $\var{R_t} \eqqcolon \Sigma_t$.

We have seen in the beginning that the EDMD predictor achieves this for $N=1$, and we want to generalise this procedure now. 
In particular, we want to derive a closed-form solution that can, then, be algorithmically exploited.

To study the variance $\Sigma_t$ at a time $t$, we begin by describing its evolution over time if $A \in E_{N-1}$. 
Since $R_1 = R$, the expression for $\Sigma_1$ is given by \cororef{coro:residual_variance}.
For the future time steps, we rely on the local error $D_t$. 
For this, denote by
\begin{align*}
    \Omega_t &\coloneqq \var{D_t}, &
    \Gamma_t &\coloneqq \exv{R_t D_t^\top},
\end{align*}
the variance of $D_t$ and the covariance of $R_t$ and $D_t$, respectively. 
The variance of the local error admits a simple form.
\begin{corollary} \label{coro:local_error_variance}
    If $A \in E_{N-1}$, then for each $t \in \{0,\dots,N-1\}$ the variance $\Omega_t$ of $D_t$ is given by
    \begin{align*}
        \Omega_t = \frac{1}{L} (G_{t+1} - A G_t) (G_{t+1} - A G_t)^\top,
    \end{align*}
    with trace
    \begin{equation*}
        \tr(\Omega_t) = \frac{1}{L} \| G_{t+1} - A G_t \|_\mathrm{F}^2
    \end{equation*}
\end{corollary}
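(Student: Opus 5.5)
The proof mirrors that of \cororef{coro:residual_variance}, with $G_0,G_1$ replaced by $G_t,G_{t+1}$. First we show that $D_t$ has zero mean. Since $A \in E_{N-1}$, the conditions $\nu_{k+1} = A\nu_k$ hold for all $k \in \{0,\dots,N-1\}$. In particular $\nu_{t+1} = A\nu_t$, so \cororef{coro:expectation_local_error_vanish} gives $\beta_t = \exv{D_t} = 0$. The variance therefore reduces to the second moment:
\begin{equation*}
    \Omega_t = \exv{(D_t - \beta_t)(D_t-\beta_t)^\top} = \exv{D_t D_t^\top}.
\end{equation*}

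Next we evaluate this expectation with respect to the empirical measure $P_L$, as in the proof of \lemmaref{lemma:residual_expectation_vanish}. This gives
\begin{equation*}
    \Omega_t = \frac{1}{L}\sum_{i=1}^L D_t(x_i) D_t(x_i)^\top .
\end{equation*}
By \eqref{eq:Dt}, $D_t(x_i) = \Psi(x(t+1,x_i)) - A\Psi(x(t,x_i))$. This is precisely the $i$-th column of $G_{t+1} - A G_t$, because the $i$-th columns of $G_t$ and $G_{t+1}$ are $\Psi(x(t,x_i))$ and $\Psi(x(t+1,x_i))$. A sum of outer products of the columns of a matrix $M$ equals $MM^\top$, so
\begin{equation*}
    \Omega_t = \frac{1}{L}(G_{t+1}-AG_t)(G_{t+1}-AG_t)^\top .
\end{equation*}
Expanding the product as in \cororef{coro:residual_variance} is unnecessary; the column identity gives the result directly.

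For the trace, take $M = G_{t+1} - AG_t$ and use the definition of the Frobenius norm via the trace inner product together with cyclicity of the trace:
\begin{equation*}
    \tr(MM^\top) = \tr(M^\top M) = \iprod{M}{M} = \|M\|_\mathrm{F}^2 .
\end{equation*}

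The only step with real content is the first: unbiasedness of the local error at every $t \le N-1$. This is exactly what the hypothesis $A\in E_{N-1}$ supplies, rather than the weaker $A \in E_0$. Without it, $\Omega_t$ would carry the correction $-\beta_t\beta_t^\top$. Everything else is bookkeeping of the data matrices.
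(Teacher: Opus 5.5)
Your proof is correct and follows essentially the paper's route: you use $A \in E_{N-1}$ via \cororef{coro:expectation_local_error_vanish} to get $\beta_t = 0$, so that $\Omega_t = \exv{D_t D_t^\top}$, and then evaluate this under the empirical measure as in \cororef{coro:residual_variance}. Your column-wise outer-product identity is just a tidier replacement for the paper's four-term expansion.
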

\begin{proof}
    The assumption implies $\var{D_t} = \exv{D_t D_t^\top}$, and so the expression and trace for $\Omega_t$ can be proven with the same steps as in \cororef{coro:residual_variance}.
\end{proof}
Similar to the evolution of the first moment, the second moment of $R_t$ can also be expressed in terms of its previous variance and the one-step variance. 
However, a covariance term has to be added.
\begin{proposition} \label{prop:residual_variance_multi_step}
    Let $A \in E_{N-1}$. 
    Then,
    \begin{align*}
        \Sigma_{t+1} &= A \Sigma_t A^\top + A \Gamma_t + \Gamma_t^\top A^\top + \Omega_t ,
    \end{align*}
    for each $t \in \{1,\dots,N-1\}$. 
\end{proposition}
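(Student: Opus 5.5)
The plan is to reuse the pointwise recursion from the proof of \propref{prop:multi_step_residual}: for every $\bar{x} \in X$ and $t \in \{1,\dots,N-1\}$,
\begin{equation*}
    R_{t+1}(\bar{x}) = A R_t(\bar{x}) + D_t(\bar{x}).
\end{equation*}
Taking second moments with respect to the empirical measure $P_L$ of both sides then yields the claim. The key preliminary observation is that all the relevant first moments vanish. Since $A \in E_{N-1}$, \cororef{coro:expectation_residual_multi_vanish} and \cororef{coro:expectation_local_error_vanish} give $\mu_1 = \dots = \mu_N = 0$ and $\beta_0 = \dots = \beta_{N-1} = 0$.

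First, I will note that, because the relevant means are zero, each variance reduces to a raw second moment. Namely, $\Sigma_t = \exv{R_t R_t^\top}$ for $t \in \{1,\dots,N\}$, and $\Omega_t = \exv{D_t D_t^\top}$, the latter as already used in \cororef{coro:local_error_variance}. Likewise $\Gamma_t = \exv{R_t D_t^\top}$ is, by definition, a genuine covariance here, since both factors are centred.

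Second, I will expand the outer product pointwise:
\begin{equation*}
    R_{t+1} R_{t+1}^\top = A R_t R_t^\top A^\top + A R_t D_t^\top + D_t R_t^\top A^\top + D_t D_t^\top .
\end{equation*}
Then I will apply the expectation, using linearity and the fact that the constant matrix $A$ can be pulled out of the finite sum defining $\int \cdot \, \mathrm{d}P_L$. The cross term $\exv{D_t R_t^\top}$ equals $\exv{R_t D_t^\top}^\top = \Gamma_t^\top$. The left-hand side is $\Sigma_{t+1}$, which uses $\mu_{t+1}=0$; this is why the time range $t+1 \le N$ matters.

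There is no genuine obstacle. The only point requiring care is the index bookkeeping: checking that $\mu_{t+1}$ and $\beta_t$ both vanish for the full range $t \in \{1,\dots,N-1\}$, which is exactly what $A \in E_{N-1}$ guarantees. It is also worth remarking that the zero-mean property is what allows the variance to be written without extra terms of the form $\mu_t \beta_t^\top$.
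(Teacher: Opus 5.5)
Your proposal is correct and follows essentially the same route as the paper. It uses the pointwise identity $R_{t+1} = A R_t + D_t$ from the proof of Proposition~\ref{prop:multi_step_residual} and the vanishing of $\mu_t$, $\mu_{t+1}$ and $\beta_t$ guaranteed by $A \in E_{N-1}$, then expands the outer product and applies linearity of the expectation, with the cross term $\mathbb{E}(D_t R_t^\top) = \Gamma_t^\top$ made explicit where the paper leaves it implicit.
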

\begin{proof}
    Fix any $t \in \{1,\dots,N-1\}$.
    By \cororef{coro:expectation_residual_multi_vanish}, the assumption $A \in E_{N-1}$ in particular implies $\mu_{t+1} = 0$, $\mu_t = 0$ and $\beta_t = 0$. 
    Hence, the variance is computed as $\Sigma_{t+1} = \exv{R_{t+1} R_{t+1}^\top}$. 
    Moreover, from the proof of \propref{prop:multi_step_residual} we have $R_{t+1} = A R_t + D_t$. 
    Combing this implies
    \begin{align*}
        \Sigma_{t+1} &= \exv{(A R_t + D_t)(A R_t + D_t)^\top} \\
        &= A \exv{R_t R_t^\top} A^\top + A \exv{R_t D_t^\top} + \exv{D_t R_t^\top} A^\top \\
        &\quad+ \exv{D_t D_t^\top} .
    \end{align*}
    where we have used the linearity of expectation. 
    Since $\mu_t = \beta_t = 0$, we have $\exv{R_t R_t^\top} = \var{R_t}$ and $\exv{D_t D_t^\top} = \var{D_t}$, which establishes the expression for $\Sigma_{t+1}$.
\end{proof}
Hence, the variance of $\Sigma_{t+1}$ consists of three parts:
\begin{enumerate}
    \item[(i)] the forward-propagated variance of the last step; given by $A \Sigma_t A^\top$. 
    \item[(ii)] the covariance of $A R_1$ and $D_1$; given by $A \Gamma_1 + \Gamma^\top A^\top$.
    \item[(iii)] the variance of $D_t$; given by $\Omega_t$.
\end{enumerate}
Out of these terms, $\Omega$ is the only one that is quadratic in $A$, the other two exhibit a more complicated dependence of $A$, which makes an exact analysis of $\Sigma_{t+1}$ cumbersome. 
To overcome this, we propose an approximation of the true variance that results in a simplified analysis. 

To this end, we note the following idealised result.
\begin{lemma} \label{lemma:zero_local_variance}
    Let $A \in E_{N-1}$ and $t \in \{1, \dots,N\}$. 
    If
    \begin{equation*}
        \forall k \in \{0, \dots, t-2\} \colon \tr(\Omega_k) = 0
    \end{equation*}
    holds, then $\Sigma_{t} = \Omega_{t-1}$.
\end{lemma}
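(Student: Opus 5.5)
The plan is to show that the hypothesis forces the local errors $D_0,\dots,D_{t-2}$ to vanish at every data point, and then read off $R_t = D_{t-1}$ from the recursion in the proof of \propref{prop:multi_step_residual}.

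\emph{Step 1: pointwise vanishing of the early local errors.} Since $A \in E_{N-1}$, \cororef{coro:local_error_variance} applies. It gives $\tr(\Omega_k) = \frac{1}{L}\|G_{k+1} - A G_k\|_\mathrm{F}^2$ for each $k$. So $\tr(\Omega_k) = 0$ means $G_{k+1} = A G_k$. Column by column, this says $D_k(x_i) = \Psi(x(k+1,x_i)) - A\Psi(x(k,x_i)) = 0$ for every sample $x_i$ and every $k \in \{0,\dots,t-2\}$. Because all expectations are taken with respect to the empirical measure $P_L$, the random variable $D_k$ is zero $P_L$-almost surely. That is all that matters for the moments.

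\emph{Step 2: induction on the residual.} I would prove by induction on $s \in \{1,\dots,t-1\}$ that $R_s(x_i) = 0$ for all $i$.
\begin{itemize}
\item Base case: $R_1 = D_0$, which vanishes on the data by Step 1.
\item Inductive step: the proof of \propref{prop:multi_step_residual} gives the pointwise identity $R_{s+1} = A R_s + D_s$. Both terms vanish on the samples for $s \le t-2$, so $R_{s+1}(x_i) = 0$.
\end{itemize}
Applying the same identity once more gives $R_t = A R_{t-1} + D_{t-1} = D_{t-1}$ on all samples.

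\emph{Step 3: conclusion.} Both $\Sigma_t$ and $\Omega_{t-1}$ are computed with respect to $P_L$. So $\Sigma_t = \var{R_t} = \var{D_{t-1}} = \Omega_{t-1}$. Alternatively, one can argue via \propref{prop:residual_variance_multi_step}: when $\Sigma_{t-1} = 0$ and $R_{t-1} = 0$ on the data, the covariance $\Gamma_{t-1}$ is zero, and the recursion reduces to $\Sigma_t = \Omega_{t-1}$.

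\emph{Edge case and main obstacle.} For $t = 1$ the hypothesis is vacuous, and the claim reduces to $\Sigma_1 = \var{D_0} = \Omega_0$, which holds since $R_1 = D_0$. The only point needing care is Step 1: passing from a zero trace of the covariance to $D_k$ vanishing on every sample. This uses the zero-mean property from $A \in E_{N-1}$, so that the variance equals the second moment, together with the Frobenius-norm formula.
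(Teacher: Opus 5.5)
Your proof is correct and follows the paper's argument: both use \cororef{coro:local_error_variance} to turn $\tr(\Omega_k)=0$ into $G_{k+1}=AG_k$ for $k \le t-2$, and then conclude that $R_t$ and $D_{t-1}$ coincide on the samples. The paper telescopes $A^tG_0 = AG_{t-1}$ inside the closed form $\Sigma_t = \frac{1}{L}(G_t - A^tG_0)(G_t-A^tG_0)^\top$, whereas you unroll the same identity inductively via $R_{s+1}=AR_s+D_s$; this is the same argument written recursively.
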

\begin{proof}
    Fix any $t \in \{1, \dots, N\}$. 
    Since $A \in E_{N-1}$, the variance $\Sigma_{t}$ is computed as
    \begin{align*}
        \Sigma_t &= \exv{R_{t} R_{t}^\top} 
        = \frac{1}{L} \sum_{i=1}^L R_{t}(x_i) R_{t}(x_i)^\top \\
        &= \left( G_{t} - A^{t} G_0 \right) \left( G_{t} - A^{t} G_0 \right)^\top .
    \end{align*}
    By \cororef{coro:local_error_variance}, the made assumption implies
    \begin{align*}
        \forall k \in \{0, \dots, t-2\} \colon G_{k+1} = A G_k ,
    \end{align*}
    from which we conclude
    \begin{equation*}
        A^t G_0 = A^{t-1} A G_0 = A^{t-2} A G_1 = \dots = A G_{t-1} .
    \end{equation*}
    The assertion then follows directly from inserting this equality in the expression for $\Sigma_t$.
\end{proof}
\begin{remark}
    In the light of \propref{prop:residual_variance_multi_step}, for $t>1$ the assumption in \lemmaref{lemma:zero_local_variance} is equivalent to
    \begin{equation*}
        A \Sigma_{t-1} A^\top + A \Gamma_{t-1} + \Gamma_{t-1}^\top A^\top = 0 .
    \end{equation*}
    Elaborating on this would be out of scope here, but a refined analysis of the relation between and the evolution of the propagated variance $A \Sigma_t A^\top$ and the covariance $A \Gamma_t + \Gamma_t^\top A^\top$ is an interesting question for future research.
\end{remark}
In principle, it is thus possible to minimise $\tr(\Sigma_N)$ by minimising $\tr(\Omega_{N-1})$ under the constraint that $\tr(\Omega_t) = 0$ for $t < N-1$.
In case of the variance, however, achieving this constraint even for a single time step is in general not possible, since $A$ does not offer enough degrees of freedom. 
Hence, we cannot expect to \emph{exactly} satisfy the assumption in \lemmaref{lemma:zero_local_variance} for any $t > 0$.

However, based on \lemmaref{lemma:zero_local_variance}, we may still argue that as long as $\tr(\Omega_k)$ is \enquote{reasonably} small for $k \in \{0, \dots,t-2\}$, then the approximation $\Sigma_t \approx \Omega_{t-1}$ is fair. 
We thus conclude that a possible approach to obtain $A$ is to simultaneously minimise
\begin{align*}
    \frac{1}{L} &\| G_{1} - A G_0 \|^2 & &\dots &
    \frac{1}{L} &\| G_N - A G_{N-1} \|^2
\end{align*}
under the constraint $A \in E_{N-1}$. 
This can be seen as (approximately) minimising $\tr(\Sigma_1), \dots, \tr(\Sigma_N)$. 

\section{Design of the Predictor} \label{sec:predictor}

We are finally prepared to introduce a design procedure for $A$. 
As a first requirement, we demand $A \in E_{N-1}$, which can be easily satisfied for $m>N$ by virtue of \propref{prop:zero_expectation_predictor}: we set $A = A_1 Q_1^\top + A_2 Q_2^\top$ with $A_1 = C \hat{R}^{-1}$. 

The second requirement is to minimise $\tr(\Sigma_N)$, which we base on the previous discussion. 
In particular, we search for an $A_2$ such that for all $t \in \{1, \dots, N-1\}$
\begin{equation*}
    \frac{1}{L} \| G_{t+1} - A G_t \|_\mathrm{F}^2 \approx 0 .
\end{equation*}
This ensures that the approximation $\Sigma_t \approx \Omega_{t-1}$ is reasonable, and thus at the same time it readily minimises $\tr(\Sigma_t) \approx \tr(\Omega_{t-1})$.

However, this requirement on $A_2$ is not straightforward to achieve, since the individual objectives $\| G_{t+1} - A G_t \|_\mathrm{F}^2$ compete against each other. 
In this work, we propose to select $A$ that minimises the simple additive objective
\begin{equation} \label{eq:additive_objective}
    \sum_{t=0}^{N-1} \| G_{t+1} - A G_t \|_F^2 
\end{equation}
over $E_{N-1}$.
By virtue of the explicit parametrisation of $A$, this can be cast as an unconstrained least-squares problem. 
Introducing the aggregated data matrices
\begin{align} \label{eq:aggregated_data_matrices}
    G &= \Matrix{ G_0 & \dots & G_{N-1} }, &
    S &= \Matrix{ G_1 & \dots & G_{N} },
\end{align}
turns the objective \eqref{eq:additive_objective} into
\begin{equation*}
    \| S - A G \|_\mathrm{F}^2 
    = \| (S - A_1 Q_1^\top G) - A_2Q_2 G \|_\mathrm{F}^2 
    \eqqcolon \| \tilde{S} - A_2 \tilde{G} \|_\mathrm{F}^2 .
\end{equation*}
Assuming that $\tilde{G}$ has full row rank, the unique least-squares solution is achieved by $A_2 = \tilde{S} \tilde{G}^\top (\tilde{G} \tilde{G}^\top)^{-1}$, from which we can recover our \emph{proposed predictor}
\begin{equation} \label{eq:predictor}
    A = C \hat{R}^{-1} Q_1^\top + \tilde{S} \tilde{G}^\top (\tilde{G} \tilde{G}^\top)^{-1} Q_2^\top .
\end{equation}

While conceptually simple, this procedure has several advantages.
First, it guarantees $\mu_1 = \dots = \mu_N = 0$, ensuring that the predictor is unbiased over the entire prediction horizon.
Second, the resulting unbiasedness allows the variances to be computed without having to account for the expectations.
Third, the predictor is obtained by solving a convex optimisation problem, guaranteeing global optimality.
Finally, the optimal value of the least-squares problem provides a (rough) upper bound for each $\tr(\Omega_t)$; if this bound is too large, the prediction horizon $N$ can be reduced accordingly.

It is important to note that the proposed method is not equivalent to applying the EDMD algorithm to the aggregated data matrices $G$ and $S$.
In EDMD, the predictor would be given by $\bar{A} = S G^\top (G G^\top)$, which does \emph{not} lead to $\mu_1 = \dots = \mu_N = 0$. 

\section{Numerical Example} \label{sec:example}

We consider the unforced and undamped Duffing oscillator $\ddot{x}(t) = x(t) - x(t)^3$. 
We use the state-space representation $x_1 = x$ and $x_2 = \dot{x}$, and sample the continuous dynamics with sampling time $\Delta t = 0.1$. 
We draw $L = 10,000$ states uniformly from $X = [-1,1]^2$, from which we simulate $N=5$ steps forward in time.
The observables $(\psi_1,\dots,\psi_m)$ correspond to the monomial up to degree ten, which gives $m=66$. 
This setup defines the data matrices $G_0, \dots, G_5$.

We consider three predictors:
\begin{enumerate}
    \item[(i)] 
        The original EDMD predictor based on $G_1$ and $G_2$, \ie $\hat{A} = G_1 G_0^\top (G_0 G_0^\top)^{-1}$.
    \item[(ii)] 
        The EDMD predictor that uses the aggregated data matrices in \eqref{eq:aggregated_data_matrices}, \ie $\bar{A} = S G^\top (G G^\top)^{-1}$.
    \item[(iii)] 
        Our proposed predictor $A$ in \eqref{eq:predictor}.
\end{enumerate}
First, consider \figref{fig:residual_expected}. 
It shows how far the expected residual -- measured by the empirical measure -- is away from zero. 
We can see that our predictor is indeed unbiased, as desired.
The one-step EDMD predictor $\hat{A}$ leads to the highest expected residual error. 
This is expected, since no multi-step information were used in the training of $\hat{A}$.
The expectation of the residual stemming from the aggregated predictor $\bar{A}$ -- which does have access to multi-step information -- is smaller, but the predictor is still biased. 
This highlights the importance of the analysis and synthesis performed in this work.
\begin{figure}
    \centering
    \begin{tikzpicture}[scale=0.75]
        \begin{axis}[%
            xmin=1,
            xmax=5,
            xlabel=$t$,
            ylabel=$\| \mu_t \|$,
            ymin=-0,
            ymax=1e-2,
            grid=both,
            legend pos=north west,
        ]
            \addplot[
                color=myblue,
                mark=square,
                ]
                coordinates {
                    (1,0)
                    (2,1.5132e-04)
                    (3,0.0012)
                    (4,0.0062)
                    (5,0.0231)
                }; 
            \addlegendentry{\footnotesize{$\hat{A}$}}
            \addplot[
                color=mygreen,
                mark=square,
                ]
                coordinates {
                    (1,6.9281e-05)
                    (2,1.6042e-04)
                    (3,3.1498e-04)
                    (4,6.5528e-04)
                    (5,0.0014)
                }; 
            \addlegendentry{\footnotesize{$\bar{A}$}}
            \addplot[
                color=myorange,
                mark=square,
                ]
                coordinates {
                    (1,0)
                    (2,0)
                    (3,0)
                    (4,0)
                    (5,0)
                }; 
            \addlegendentry{\footnotesize{$A$}}
        \end{axis}
    \end{tikzpicture}
    \caption{Evolution of the residual expectation over time 
for the three predictors (\(\hat{A}\): one-step EDMD, 
\(\bar{A}\): aggregated EDMD, \(A\): proposed unbiased predictor).}
    \label{fig:residual_expected}
\end{figure}
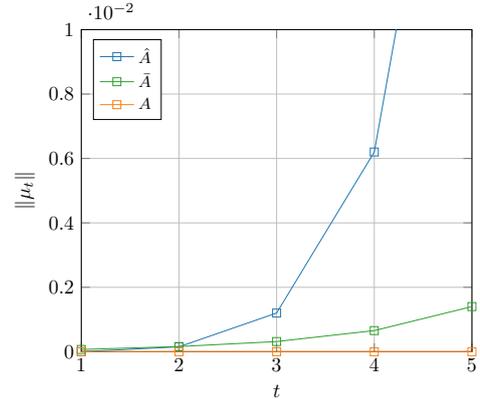

Turning to \figref{fig:residual_variance}, we can see how the variance associated with the three predictors evolves over time. 
As before, the predictor $\hat{A}$ performs worst, which is expected. 
The key point to note is that the proposed predictor $A$ matches the variances of $\bar{A}$. 
Therefore, the associated residuals vary by the same amount around their respective expectations.
The predictions based on $A$, however, are more interpretable, since the residual varies around zero, whereas the predictions based on $\bar{A}$ are biased. 
Hence, given that the dictionary -- and thus the degrees of freedom in $A$ -- are large enough, we do not have to pay for the unbiased $A$ by a higher variance.

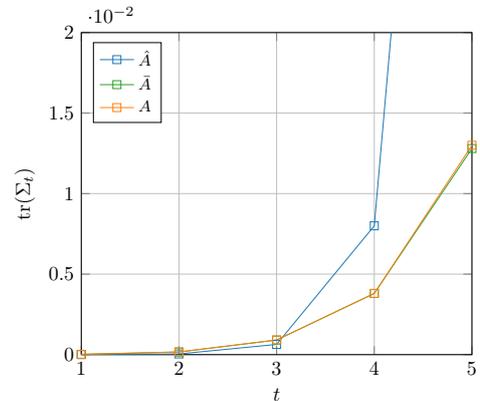
\begin{figure}
    \centering
    \begin{tikzpicture}[scale=0.75]
        \begin{axis}[%
            xmin=1,
            xmax=5,
            xlabel=$t$,
            ylabel=$\tr(\Sigma_t)$,
            ymin=-0,
            ymax=2e-2,
            grid=both,
            legend pos=north west,
        ]
            \addplot[
                color=myblue,
                mark=square,
                ]
                coordinates {
                    (1,3.0104e-06)
                    (2,4.3350e-05)
                    (3,6.3559e-04)
                    (4,0.0080)
                    (5,0.0773)
                }; 
            \addlegendentry{\footnotesize{$\hat{A}$}}
            \addplot[
                color=mygreen,
                mark=square,
                ]
                coordinates {
                    (1,1.9095e-05)
                    (2,1.6558e-04)
                    (3,9.0481e-04)
                    (4,0.0038)
                    (5,0.0128)
                }; 
            \addlegendentry{\footnotesize{$\bar{A}$}}
            \addplot[
                color=myorange,
                mark=square,
                ]
                coordinates {
                    (1,1.9366e-05)
                    (2,1.6709e-04)
                    (3,9.1227e-04)
                    (4,0.0038)
                    (5,0.0130)
                }; 
            \addlegendentry{\footnotesize{$A$}}
        \end{axis}
    \end{tikzpicture}
    \caption{Evolution of the residual variance over time 
for the three predictors (\(\hat{A}\): one-step EDMD, 
\(\bar{A}\): aggregated EDMD, \(A\): proposed unbiased predictor).}
    \label{fig:residual_variance}
\end{figure}

\section{Conclusion} \label{sec:conclusion}

Accurate long-horizon prediction is essential for data-driven modelling and control, yet Koopman-based predictors  often suffer from structural bias that accumulates over time. This highlights the need for principled methods that enforce unbiasedness while controlling variance.

A multi-step predictor is constructed that guarantees unbiasedness and achieves low variance in the Koopman-lifted space. A probabilistic characterisation of the residual error provides the structural conditions under which unbiased multi-step prediction is possible and exposes the degrees of freedom available for predictor design. These degrees of freedom are then exploited to enforce unbiasedness and to minimise the multi-step variance through a convex optimisation procedure.

A numerical study confirms the effectiveness of the resulting predictor. When the lifted space is sufficiently expressive, the method achieves variance levels comparable to existing EDMD-based approaches while additionally ensuring unbiased behaviour across the entire prediction horizon.

The proposed framework offers a principled tool for constructing reliable multi-step Koopman predictors from finite data. It lays the foundation for uncertainty-aware forecasting, learning - yet model-based control, where bias-free predictions and consistent variance propagation are important.

\bibliographystyle{ieeetr}
\bibliography{references}

\end{document}